\documentclass[11pt]{amsart}
\usepackage{amsmath,amsthm,amssymb,mathtools,enumitem,hyperref}

\newtheorem{theorem}{Theorem}[section]
\newtheorem{lemma}[theorem]{Lemma}
\newtheorem{prop}[theorem]{Proposition}
\newtheorem{cor}[theorem]{Corollary}
\theoremstyle{definition}
\newtheorem{definition}[theorem]{Definition}

\newtheorem{question}[theorem]{Question}

\newtheorem{remark}[theorem]{Remark}

\newcommand{\Aut}{\operatorname{Aut}}

\title{On minimal non-sofic and $\omega$-non-sofic groups}
\author{Kıvanç ERSOY}
\address{Institut für Mathematik, Freie Universität Berlin, Arnimallee 7, 14195 Berlin, Germany}
\email{ersoy@zedat.fu-berlin.de}\date{}

\begin{document}

\maketitle

\begin{abstract}
We investigate structural properties of non-sofic groups, assuming that such groups exist. We introduce and study two classes: minimal non-sofic groups and $\omega$-non-sofic groups. For minimal non-sofic groups, we establish strong structural restrictions. In particular, we show that if $G$ is a minimal non-sofic group and $M$ is a finitely generated residually finite maximal normal subgroup of $G$, then $M$ is central and $G$ is a perfect central extension of a finitely generated non-amenable simple group.

On the other hand, we show that locally graded non-sofic groups are necessarily $\omega$-non-sofic. More precisely, such groups contain finitely generated non-sofic subgroups admitting strictly decreasing chains of finitely generated normal subgroups whose intersection is non-trivial and lies in the profinite residual.

Finally, using results on existentially closed groups, we prove that the existence of a non-sofic group implies the existence of a countable existentially closed non-sofic group whose centralizers form a densely ordered chain of non-sofic subgroups of order type $(\mathbb{Q},\leq)$. In particular, we show that if a non-sofic group exists, then the class of $\omega$-non-sofic groups is non-empty. Moreover, we prove that the existence of a non-sofic group implies the existence of a non-sofic group of unbounded exponent.
\end{abstract}

\section{Introduction}

Sofic groups were introduced by Gromov \cite{Gromov1999} as a common generalization of residually finite and amenable groups. For background, definitions, and notation, we refer to \cite{Gromov1999} and the expository articles \cite{Pestov2008, Weiss2000}.

\begin{definition}\cite[Definition 3.1]{Pestov2008}
A group $G$ is called \emph{sofic} if it is isomorphic to a subgroup of a metric ultrapower of a suitable family of finite symmetric groups equipped with the normalized Hamming metric.

Equivalently, there exist
\begin{itemize}
\item a set $A$,
\item an ultrafilter $\mathcal{U}$ on $A$, and
\item a map $\alpha \mapsto n(\alpha)$ from $A$ to $\mathbb{N}$,
\end{itemize}
such that
\[
G \leq \prod\nolimits_{\alpha \in A} \bigl(S_{n(\alpha)}, d_{\mathrm{hamm}}\bigr)\Big/ \mathcal{U}.
\]
\end{definition}

There are several equivalent definitions of sofic groups, including approximation by finite symmetric groups, embeddings into metric ultraproducts, and approximation of Cayley graphs. We refer to \cite{Weiss2000, elsza, Pestov2008, CapraroLupini2015} for the equivalence of these formulations.

As observed in \cite[Examples 4.1, 4.2, and 4.4]{Pestov2008}, finite groups, residually finite groups, and amenable groups are sofic. It remains open whether every group is sofic.

\begin{question}\label{tem}
\cite[Question 3.8]{Pestov2008} Is every (countable discrete) group sofic?
\end{question}

In this paper, we study the following two classes of groups:

\begin{definition}
A group $G$ is called \emph{minimal non-sofic} if $G$ is not sofic, but every proper subgroup of $G$ is sofic.
\end{definition}

\begin{definition}
A non-sofic group $G$ is called \emph{$\omega$-non-sofic} if it admits a strictly descending chain of non-sofic subgroups.
\end{definition}

Since it is not known whether non-sofic groups exist at all, the class of minimal non-sofic groups may be empty. Nevertheless, studying such groups provides a natural approach to Question~\ref{tem}. Well-known candidates for non-soficity include Higman's group, Thompson's groups, and Tarski monsters; they are infinite, possibly non-amenable, and not residually finite. In particular, if a Tarski monster were non-sofic, then it would be minimal non-sofic, since all of its proper subgroups are finite. By contrast, Thompson's groups would be $\omega$-non-sofic if they were non-sofic, since they contain proper subgroups isomorphic to themselves.

This paper is intended as a first step in a broader project aimed at understanding the possible structure of non-sofic groups, assuming that such groups exist. The general approach is to combine classical results from group theory with the study of two natural classes of candidate non-sofic groups, namely minimal non-sofic groups and $\omega$-non-sofic groups.

In this work, we investigate structural properties of non-sofic groups under this perspective. It is straightforward to observe that if a non-sofic group exists, then it either contains a minimal non-sofic subgroup or is $\omega$-non-sofic. Our aim is to study these two classes in the hope that the resulting structural constraints may eventually lead to contradictions, thereby contributing to the problem of whether certain specific groups are sofic.

We first restrict our attention to periodic minimal non-sofic groups. Since natural candidates include Tarski monsters, it is reasonable to ask the following questions.

\begin{question}\label{pexp}
Let $G$ be a periodic minimal non-sofic group. Must $G$ have prime exponent?
\end{question}

\begin{question}\label{bexp}
Let $G$ be a periodic minimal non-sofic group. Must $G$ have bounded exponent?
\end{question}

Although we do not know the answers to Questions~\ref{pexp} and~\ref{bexp} for minimal non-sofic groups, we record the following consequence of a result of Ivanov; see \cite{ivols}.

\begin{prop}\label{expthm}
If there exists a periodic non-sofic group without involutions, then there exists a divisible periodic non-sofic group.
\end{prop}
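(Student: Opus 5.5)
The argument rests on Ivanov's embedding theorem \cite{ivols}: every countable periodic group without involutions embeds into a countable periodic divisible group, and I expect this holds in general when the group has no involutions. Combined with the fact that soficity passes to subgroups, the proof is short.

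First, reduce to the countable case. Let $H$ be a periodic non-sofic group without involutions. Soficity is a local property: a group is sofic if and only if all of its finitely generated subgroups are sofic, since sofic approximations only have to be checked on finite subsets. So $H$ contains a finitely generated non-sofic subgroup $H_0$. As a subgroup of $H$, the group $H_0$ is countable and periodic, and it still has no involutions.

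Next, apply Ivanov's result to $H_0$. This gives an embedding $H_0 \hookrightarrow D$, where $D$ is a periodic divisible group; here divisible means every element has an $n$-th root for every $n \geq 1$. If $D$ were sofic, then $H_0$ would be sofic as well, because subgroups of sofic groups are sofic. This is immediate from the definition: a subgroup of a subgroup of a metric ultraproduct of symmetric groups is again such a subgroup. Hence $D$ is a divisible periodic non-sofic group, as required.

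The main obstacle is making sure Ivanov's theorem is stated exactly with the hypotheses we have. Its setting is periodic groups without involutions, and possibly it requires countability, which we have arranged by passing to $H_0$. We also need its output to be genuinely periodic and divisible, not merely to contain roots of the elements of $H_0$. If the cited version only produces roots of elements of the original group, I would iterate the construction. Set $H_0 \le D_1 \le D_2 \le \cdots$, where each $D_{i+1}$ is periodic, has no involutions, and contains roots of all elements of $D_i$. The union $\bigcup_i D_i$ is then periodic and divisible. It is non-sofic because it contains $H_0$. Each step must preserve the absence of involutions so the iteration can continue, and that is exactly what Ivanov's construction provides.
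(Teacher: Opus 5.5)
Your proposal is correct and matches the paper's proof: embed the periodic non-sofic group without involutions into a divisible periodic group via Ivanov--Ol'shanskii \cite[Theorem 20]{ivols}, then use that subgroups of sofic groups are sofic. Your preliminary reduction to a finitely generated non-sofic subgroup is a harmless extra step that covers the case where the cited theorem is stated only for countable groups, and your iteration fallback is not needed.
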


\begin{cor}
If there exists a periodic non-sofic group without involutions, then there exists a non-sofic group of unbounded exponent.
\end{cor}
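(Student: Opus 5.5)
The corollary should follow directly from Proposition~\ref{expthm}. The only extra ingredient is the elementary fact that a nontrivial divisible periodic group has unbounded exponent. So the plan is: produce a divisible periodic non-sofic group $D$ from the hypothesis, then check that $D$ cannot have finite exponent.

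First, assume there is a periodic non-sofic group without involutions. Proposition~\ref{expthm} then gives a divisible periodic non-sofic group $D$. Since the trivial group is finite, hence sofic, $D$ is nontrivial.

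Next, suppose for a contradiction that $D$ has finite exponent $n \geq 1$, so $x^n = 1$ for all $x \in D$. Take any $g \in D$. Divisibility gives some $h \in D$ with $h^n = g$, hence $g = h^n = 1$. Thus $D$ would be trivial, a contradiction. Therefore $D$ has unbounded exponent. Because $D$ is also periodic, this means the orders of its elements are finite but not uniformly bounded, i.e.\ $D$ is a non-sofic group of unbounded exponent, as claimed.

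There is no real obstacle once Proposition~\ref{expthm} is granted. The only points to state carefully are the convention that ``divisible'' means every element has an $n$-th root for every $n \geq 1$ (so that $n$ may be taken to be the putative exponent), and the remark that nontriviality of $D$ comes from non-soficity. All the substantive work, namely Ivanov's embedding result and the preservation of non-soficity under passing to overgroups, sits inside Proposition~\ref{expthm}. Here we only need the fact that any group containing a non-sofic subgroup is itself non-sofic, since subgroups of sofic groups are sofic.
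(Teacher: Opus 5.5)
Your proposal is correct and follows the paper, which obtains the corollary directly from Proposition~\ref{expthm}. You also supply the elementary step the paper leaves implicit: a nontrivial divisible group cannot satisfy $x^n=1$ for a fixed $n$, since every $g=h^n$ would then be trivial.
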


Locally graded groups form a broad class containing all residually finite groups. It is unknown whether every locally graded group is sofic.

\begin{question}\label{locgr}
Is every locally graded group sofic?
\end{question}

We prove the following result, which is related to Question~\ref{locgr}.

\begin{theorem}\label{locgrnon}
Let $G$ be a locally graded non-sofic group. Then $G$ contains a finitely generated non-sofic subgroup $H$ and a strictly decreasing sequence of finitely generated normal subgroups
\[
H=N_0 > N_1 > N_2 > \cdots
\]
of $H$ such that
\begin{itemize}
\item each $N_i$ is non-sofic;
\item the intersection $R=\bigcap_{i=0}^\infty N_i$ is non-trivial and is contained in the profinite residual of $H$;
\item $H/R$ is residually finite.
\end{itemize}
In particular, $G$ is $\omega$-non-sofic.
\end{theorem}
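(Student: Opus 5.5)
The plan is to reduce to a finitely generated non-sofic subgroup $H$, and then build the chain $N_i$ out of finite-index normal subgroups of $H$. Such a chain automatically keeps non-soficity and finite generation. Its intersection should be exactly the profinite residual of $H$.

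\textbf{Step 1: choosing $H$.} Soficity is a local property: a group is sofic if and only if all of its finitely generated subgroups are sofic. This holds because the ultraproduct embeddings, or equivalently sofic approximations, only ever test finitely many elements at a time. Hence $G$ contains a finitely generated non-sofic subgroup $H$, which is again locally graded and in particular non-trivial. Since residually finite groups are sofic, $H$ is not residually finite. So its profinite residual
\[
R=\bigcap\{K \trianglelefteq H : |H:K|<\infty\}
\]
is non-trivial. Moreover, $H/R$ is residually finite by the definition of $R$.

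\textbf{Step 2: building the chain.} Because $H$ is finitely generated, it has only countably many subgroups of finite index. Enumerate the finite-index normal subgroups of $H$ as $K_1, K_2, \dots$ and set $N_0=H$. Inductively, $N_i$ is a finite-index normal subgroup of $H$, so it is finitely generated by Schreier's lemma and non-trivial, since it contains $R\neq 1$. Local gradedness of $G$ then gives a proper finite-index subgroup $L_i<N_i$. Let $C_i=\bigcap_{h\in H}L_i^h$ be its normal core in $H$; it has finite index in $H$ and satisfies $C_i\le L_i<N_i$. Define
\[
N_{i+1}=N_i\cap C_i\cap K_{i+1}.
\]
Then $N_{i+1}$ is normal of finite index in $H$, hence finitely generated, and the inclusion $N_{i+1}<N_i$ is strict. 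By construction $\bigcap_i N_i\subseteq\bigcap_i K_i=R$. Conversely, every $N_i$ has finite index and is normal, so $R\subseteq N_i$ for all $i$. Thus $\bigcap_i N_i=R$, which is non-trivial, contained in the profinite residual (indeed equal to it), and $H/R$ is residually finite.

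\textbf{Step 3: non-soficity of each $N_i$.} It remains to check that each $N_i$ is non-sofic. Each $N_i$ has finite index in $H$, so $H$ is a (finite)-by-sofic extension in the appropriate direction: $H/N_i$ is finite and hence amenable. By the theorem of Elek and Szabó, an extension of a sofic normal subgroup by an amenable quotient is sofic. So if some $N_i$ were sofic, $H$ would be sofic, which is a contradiction. Since each $N_i$ is a subgroup of $G$, the chain $N_0>N_1>\cdots$ is a strictly descending chain of non-sofic subgroups, and $G$ is $\omega$-non-sofic.

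I expect the only non-elementary ingredient to be this closure of soficity under finite-index overgroups, that is, under sofic-by-amenable extensions. It will be quoted from Elek and Szabó rather than reproved. Everything else is routine bookkeeping with normal cores and the countable enumeration of finite-index subgroups.
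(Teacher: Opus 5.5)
Your proof is correct and follows essentially the same route as the paper. You pass to a finitely generated non-sofic $H$, take a descending chain of finite-index normal subgroups of $H$ cofinal among all of them so that the intersection is the profinite residual, and get non-soficity of each term because sofic-by-amenable (here sofic-by-finite) groups are sofic. The only difference is bookkeeping: you force strict descent at each step by intersecting with the $H$-core of a proper finite-index subgroup of $N_i$, whereas the paper takes partial intersections $L_n$ and argues by contradiction that they cannot stabilize; your explicit use of the core also makes precise the paper's passage from a finite-index normal subgroup of $L_n$ to one that is normal in $H$.
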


\begin{remark}
One may observe that minimal non-sofic groups cannot be locally graded. Namely, if $G$ is a locally graded minimal non-sofic group, since $G$ is finitely generated, it admits a finitely generated normal subgroup $N$ of finite index. By minimality, $N$ is sofic, and the quotient $G/N$ is finite. This contradicts the assumption that $G$ is non-sofic.
\end{remark}

Finally, we prove the following main result.

\begin{theorem}\label{resfinnormax}
Let $G$ be a minimal non-sofic group and let $M$ be a maximal normal subgroup. If $M$ is residually finite and finitely generated, then $M$ is central. In this case, $G$ is a perfect central extension of a finitely generated non-amenable simple group by $M$.
\end{theorem}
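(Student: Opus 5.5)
The plan is to reduce everything to two closure properties of soficity. First, soficity is a local property: a group is sofic if and only if all of its finitely generated subgroups are sofic. Second, by Elek--Szab\'o \cite{elsza}, an extension of a sofic group by an amenable group is sofic; that is, if $N\trianglelefteq G$ with $N$ sofic and $G/N$ amenable, then $G$ is sofic. I would first record the elementary consequences of minimality. If $G$ were not finitely generated, then every finitely generated subgroup would be proper, hence sofic, so $G$ would be sofic by locality. Hence $G$ is finitely generated. Next, if $G'<G$, then $G'$ is sofic by minimality and $G/G'$ is abelian, hence amenable, so $G$ would be sofic. Therefore $G$ is perfect. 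In the same way, $M$ is a proper subgroup, hence sofic, so $G/M$ cannot be amenable. As $M$ is maximal normal, $G/M$ is simple. It is finitely generated because $G$ is, and it is non-amenable (in particular infinite and non-abelian).

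The main step is to show that $M$ is central. Let $C=C_G(M)$. Conjugation gives a homomorphism $G\to \Aut(M)$ with kernel $C$, so $G/C$ embeds in $\Aut(M)$. Since $M$ is finitely generated and residually finite, Baumslag's theorem says that $\Aut(M)$ is residually finite, and hence $G/C$ is residually finite. Suppose $C\neq G$. Then $G/C$ is a non-trivial residually finite group, so it has a proper normal subgroup of finite index, say $K/C$ with $C\le K<G$. (If $G/C$ is finite we may simply take $K=C$.) Then $K$ is proper, hence sofic, and $G/K$ is finite, hence amenable. The extension result then makes $G$ sofic, which is a contradiction. Therefore $C=G$, that is, $M\le Z(G)$.

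Finally, $G$ is perfect, $M$ is central, and $G/M$ is a finitely generated non-amenable simple group, so $G$ is a perfect central extension of $G/M$ by $M$, as claimed. The step I expect to carry the real weight is the appeal to Baumslag's theorem on residual finiteness of $\Aut(M)$. This is exactly where both hypotheses (finitely generated and residually finite) on $M$ are used, and it is what produces a proper finite-index normal subgroup whenever the action of $G$ on $M$ is non-trivial. Everything else is a formal consequence of minimality together with closure of soficity under amenable extensions.
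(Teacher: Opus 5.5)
Your proposal is correct and follows essentially the same route as the paper: Baumslag's theorem makes $G/C_G(M)$ residually finite, and minimality, together with closure of soficity under finite (amenable) extensions, rules out any non-trivial finite quotient, which forces $C_G(M)=G$. The remaining assertions (perfectness of $G$, and that $G/M$ is finitely generated, simple and non-amenable) are obtained just as in the paper's preliminary propositions, with your write-up only spelling out the ``no finite images'' step more explicitly.
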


\section{Preliminaries}

We follow the definitions and notation of \cite[Chapter 2]{CapraroLupini2015}. The following proposition records several closure properties of the class of sofic groups.

\begin{prop}\cite[Proposition 2.4.1]{CapraroLupini2015}\label{temel}
     The class of sofic groups is closed under the following operations:
     \begin{enumerate}
         \item subgroups;
         \item direct limits;
         \item direct products;
         \item inverse limits;
         \item extensions by amenable groups;
         \item free products;
         \item free products amalgamated over an amenable group;
         \item HNN extensions over an amenable group;
         \item graph products.
     \end{enumerate}
\end{prop}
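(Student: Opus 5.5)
The approach is to work throughout with the local (finitary) characterization of soficity, which is equivalent to the ultraproduct definition above. A group $G$ is sofic if and only if for every finite $F\subseteq G$ and every $\varepsilon>0$ there exist $n$ and a map $\sigma\colon F\to S_n$ such that two conditions hold. First, $d_{\mathrm{hamm}}(\sigma(gh),\sigma(g)\sigma(h))<\varepsilon$ whenever $g,h,gh\in F$. Second, $d_{\mathrm{hamm}}(\sigma(g),1)>1-\varepsilon$ for $g\in F\setminus\{1\}$. The items then divide into easy ones, which follow formally, and hard ones, which require constructing approximations.

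\textbf{Easy items.}
\begin{enumerate}
\item \emph{Subgroups.} Restrict the embedding into the metric ultraproduct to the subgroup.
\item \emph{Direct limits.} Soficity is a local condition: every finite $F$ lies in a finitely generated subgroup, and that subgroup lies in the image of some term of the directed system. Applying the criterion there and composing with the map to the limit gives the approximation. For non-injective limit maps, note that the image of a finitely generated subgroup is itself a direct limit of quotients, and handle this case with the criterion applied to the images of the terms.
\item \emph{Direct products.} Given approximations $\sigma_1\colon F_1\to S_{n_1}$ and $\sigma_2\colon F_2\to S_{n_2}$, let the pair act diagonally on $[n_1]\times[n_2]$. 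The Hamming distance of a product permutation from the identity satisfies $1-d(\sigma_1\times\sigma_2,1)=(1-d(\sigma_1,1))(1-d(\sigma_2,1))$. Hence multiplicativity defects add, and non-triviality is preserved. Infinite products then follow by locality, since a finite set only involves finitely many coordinates after projecting.
\item \emph{Inverse limits.} An inverse limit is a subgroup of the direct product of the terms, so this reduces to items (1) and (3).
\end{enumerate}

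\textbf{Extensions by amenable groups.} Let $N\trianglelefteq G$ with $N$ sofic and $Q=G/N$ amenable. Fix a set-theoretic section $s\colon Q\to G$, with associated cocycle $c(q,q')=s(q)s(q')s(qq')^{-1}\in N$. Given a finite $F\subseteq G$, choose a Følner set $\Phi\subseteq Q$ that is almost invariant under the projection of $F$. Choose a sofic approximation $\sigma$ of $N$ on the finite set of elements of $N$ arising from $F$ and $\Phi$, namely conjugates of $N$-parts and cocycle values. Then let $G$ act on $\Phi\times[n]$, where $g=n_0 s(q)$ sends $(p,x)$ to $(qp,\,\sigma(\text{the appropriate }N\text{-element})x)$ when $qp\in\Phi$, and is extended arbitrarily (as a permutation) on the boundary. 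The Følner condition makes the boundary contribution less than $\varepsilon$. The approximation property of $\sigma$ controls the defect inside. Non-triviality comes from two sources: elements outside $N$ move almost every $\Phi$-coordinate, and elements of $N$ are controlled by $\sigma$. This is the Elek--Szab\'o argument.

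\textbf{Free products, amalgamation, HNN, graph products.} Free products follow from the amenable-amalgam case with the trivial subgroup, so the key case is $G=A*_C B$ with $C$ amenable. Take sofic approximations $\sigma_A$ and $\sigma_B$ on the same $[n]$. By the Elek--Szab\'o uniqueness theorem, any two sofic approximations of an amenable group $C$ of the same size are approximately conjugate. So after conjugating $\sigma_B$ by a suitable permutation, $\sigma_A|_C\approx\sigma_B|_C$. Conjugating further by a uniformly random permutation that commutes with the approximate $C$-action, a concentration estimate shows that reduced words of bounded length are mapped far from the identity with high probability. This is the step I expect to be the main obstacle. The uniqueness step and the random-conjugation estimate are where amenability of $C$ is genuinely used, and they are the heart of the Elek--Szab\'o and Păunescu proofs.

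HNN extensions over an amenable group reduce to this case via the standard embedding of an HNN extension into an amalgam with an amenable (e.g.\ free product with $\mathbb Z$) correction. Alternatively, one repeats the argument directly, with the stable letter realized by a permutation conjugating the two approximate $C$-actions.

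Graph products are handled by the Ciobanu--Holt--Rees argument. Induct on the number of vertices: a graph product over a finite graph is an amalgamated product of smaller graph products over a graph product of a subgraph. That subgroup is not amenable in general, however, so their proof instead exhibits the graph product as acting on a suitable permutation model built from approximations of the vertex groups, combined with items (2)--(3). I would cite this rather than reprove it.
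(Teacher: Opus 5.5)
The paper gives no argument of its own here: it only cites Capraro--Lupini, which collects the standard literature results you invoke, so your outline (elementary proofs of (1)--(4) from the local criterion, and the Elek--Szab\'o, P\u{a}unescu and Ciobanu--Holt--Rees arguments for (5)--(9)) is essentially the route that citation stands for. Two small corrections follow. For HNN extensions $H=\langle G,t\mid tct^{-1}=\varphi(c),\ c\in C\rangle$, the clean reduction is that the kernel of $H\to\mathbb{Z}$ is a directed union of iterated amalgams of the conjugates $t^kGt^{-k}$ over copies of $C$, so (7), (2) and (5) give soficity of $H$; the amenable correction is an extension by $\mathbb{Z}$, not a free product with $\mathbb{Z}$. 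For non-injective direct limits, say explicitly that you lift $F$ to a term far enough along that its finitely many relations and non-relations already hold there, and apply the criterion in that sofic term.
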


The following result gives an alternative definition of soficity, which is sometimes easier to work with.

\begin{theorem}\label{soficapprox}\cite[Theorem 3.5]{Pestov2008}
A group $G$ is sofic if and only if for every finite subset $F \subseteq G$ and every $\varepsilon>0$, there exist a natural number $n$ and a map
\[
\theta:F\to S_n
\]
such that
\begin{enumerate}
    \item if $g,h,gh\in F$, then
    \[
    d_{\mathrm{hamm}}(\theta(g)\theta(h),\theta(gh))<\varepsilon;
    \]
    \item if $e\in F$, then
    \[
    d_{\mathrm{hamm}}(\theta(e),\mathrm{Id})<\varepsilon;
    \]
    \item for all distinct $x,y\in F$,
    \[
    d_{\mathrm{hamm}}(\theta(x),\theta(y))\geq \frac14.
    \]
\end{enumerate}
\end{theorem}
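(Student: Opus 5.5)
The plan is to prove the two implications separately. The forward direction reads the local data off an embedding into a metric ultraproduct; the only real work there is boosting the separation constant to $\tfrac14$ by an amplification trick. The reverse direction assembles the local maps into an embedding into an ultraproduct indexed by pairs $(F,\varepsilon)$.

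\emph{Sofic $\Rightarrow$ local approximations.} Fix an embedding $\iota\colon G\to \prod_{\alpha\in A}(S_{n(\alpha)},d_{\mathrm{hamm}})/\mathcal U$, and choose representatives $\iota(g)=[(\sigma_\alpha(g))_\alpha]$ with $\sigma_\alpha(e)=\mathrm{Id}$. Let $F$ be finite and set
\[
\delta=\min\{d(\iota(x),\iota(y)) : x\neq y\in F\},
\]
which is positive because the ultraproduct metric is a genuine metric and $\iota$ is injective. Since $F$ is finite, the following sets lie in $\mathcal U$, and so does their intersection:
\[
\{\alpha : d(\sigma_\alpha(g)\sigma_\alpha(h),\sigma_\alpha(gh))<\eta \text{ for all } g,h,gh\in F\},\qquad
\{\alpha : d(\sigma_\alpha(x),\sigma_\alpha(y))>\delta/2 \text{ for all } x\neq y\in F\}.
\]
Pick some $\alpha$ in this intersection. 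This already gives conditions (1) and (2), but only separation $\delta/2$ in place of $\tfrac14$.

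The main step is amplification. For $\sigma\in S_n$, let $\sigma^{\otimes k}$ denote the diagonal action on $[n]^k$. Then
\[
d(\sigma^{\otimes k},\tau^{\otimes k})=1-(1-d(\sigma,\tau))^k,
\]
because a $k$-tuple is fixed by $\tau^{-1}\sigma$ exactly when each coordinate is. Hence $d(\sigma^{\otimes k},\tau^{\otimes k})\le k\,d(\sigma,\tau)$, while distances at least $\delta/2$ are pushed to at least $1-(1-\delta/2)^k$. Choose $k$ with $(1-\delta/2)^k\le \tfrac34$. Then choose $\eta<\varepsilon/k$; this is legitimate because $\delta$, and hence $k$, depends only on $F$ and not on $\eta$. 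The map $\theta(g)=\sigma_\alpha(g)^{\otimes k}$ now satisfies (1)–(3) with $n=n(\alpha)^k$.

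\emph{Local approximations $\Rightarrow$ sofic.} Let $A$ be the set of pairs $(F,\varepsilon)$ with $F\subseteq G$ finite and $\varepsilon>0$. The cones
\[
C(F,\varepsilon)=\{(F',\varepsilon') : F'\supseteq F,\ \varepsilon'\le\varepsilon\}
\]
have the finite intersection property, so some ultrafilter $\mathcal U$ contains all of them. For each $\alpha=(F,\varepsilon)$, take the given $\theta_\alpha\colon F\to S_{n(\alpha)}$ and extend it by $\theta_\alpha(g)=\mathrm{Id}$ for $g\notin F$. Define
\[
\Phi(g)=[(\theta_\alpha(g))_\alpha].
\]
For fixed $g,h\in G$, every $\alpha$ in $C(\{e,g,h,gh\},\varepsilon)$ has $d(\theta_\alpha(g)\theta_\alpha(h),\theta_\alpha(gh))<\varepsilon$. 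Letting $\varepsilon\to0$ shows that $\Phi$ is a homomorphism; condition (2) similarly gives $\Phi(e)=1$. For $x\neq y$, condition (3) holds on $C(\{x,y\},1)\in\mathcal U$, so $d(\Phi(x),\Phi(y))\ge\tfrac14$ and $\Phi$ is injective. Thus $G$ embeds in the metric ultraproduct, i.e.\ $G$ is sofic.

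The step I expect to be the only genuine obstacle is the uniform constant $\tfrac14$ in the forward direction. The tensor-power amplification handles it, provided one checks carefully that the separation $\delta$, and so the exponent $k$, is fixed before the multiplicativity error $\eta$ is chosen.
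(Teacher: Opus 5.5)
The paper does not prove this statement: it quotes it from Pestov's survey and uses it as a black box (for instance in Proposition~\ref{loc}), so there is no internal argument to compare yours against. Your proof is correct and self-contained, and it follows the standard route: tensor-power amplification for necessity, and an ultrafilter on pairs $(F,\varepsilon)$ for sufficiency.

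In the forward direction, every point that needs checking holds:
\begin{itemize}
\item $\sigma\mapsto\sigma^{\otimes k}$ is a homomorphism $S_n\to S_{n^k}$, so $\theta(g)\theta(h)=(\sigma_\alpha(g)\sigma_\alpha(h))^{\otimes k}$. Bernoulli's bound $1-(1-d)^k\le kd$ then controls the multiplicativity defect.
\item $\delta$, and hence $k$, depend only on $F$, so choosing $\eta<\varepsilon/k$ afterwards is legitimate.
\item Condition (2) is automatic from $\sigma_\alpha(e)=\mathrm{Id}$.
\end{itemize}

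The reverse direction is also complete: the ultrafilter on the directed set of pairs $(F,\varepsilon)$ containing all cones does the job. Note that $\Phi(e)=1$ already follows from multiplicativity, so you do not need condition (2) there.

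Two small remarks. When $|F|\le 1$, your $\delta$ is a minimum over the empty set; in that case condition (3) is vacuous and you can take $k=1$. Also, the same argument with $k$ chosen so that $(1-\delta/2)^k\le 1-r$ shows that $\tfrac14$ can be replaced by any $r\in(0,1)$, so the specific constant in the statement is immaterial.
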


The following proposition allows us to restrict attention to countable groups.

\begin{prop}\label{loc}
If $G$ is a non-sofic group, then $G$ contains a countable non-sofic subgroup.
\end{prop}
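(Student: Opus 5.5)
We argue that soficity is a local property, so that a non-sofic group must already fail to be sofic on a countable subgroup. Suppose, towards a contradiction, that every countable subgroup of $G$ is sofic. Every group is the directed union (direct limit) of its finitely generated subgroups, and these are countable. So if every countable subgroup of $G$ were sofic, then in particular every finitely generated subgroup would be sofic. Proposition~\ref{temel}(2), closure under direct limits, would then make $G$ sofic, which is a contradiction. Hence some finitely generated, and therefore countable, subgroup of $G$ is non-sofic.

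To avoid relying on the exact form of the direct-limit closure stated in Proposition~\ref{temel}, we can also argue directly with Theorem~\ref{soficapprox}. The conditions there refer only to a finite subset $F\subseteq G$ and to products of elements of $F$. Consider the subgroup $H=\langle F\rangle$, which is finitely generated and contains $F$. Each condition for $F$ inside $G$ is literally the same statement as the corresponding condition for $F$ inside $H$.

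Now $G$ is non-sofic, so by Theorem~\ref{soficapprox} there exist a finite set $F$ and an $\varepsilon>0$ for which no map $\theta\colon F\to S_n$, for any $n$, satisfies (1)--(3). Then $H=\langle F\rangle$ also fails the criterion for this same $F$ and $\varepsilon$. Hence $H$ is non-sofic, and it is countable since it is finitely generated.

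The only point that needs checking is that the approximation conditions are intrinsic to $F$ and do not depend on the ambient group. This is immediate, since the conditions involve only the multiplication of elements of $F$ and the maps into $S_n$.
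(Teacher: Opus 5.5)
Your proposal is correct and essentially matches the paper's proof. The paper also uses Theorem~\ref{soficapprox} and observes that an $(F,\varepsilon)$-approximation for $\langle F\rangle$ is one for $G$, which is the contrapositive of your second argument. Your first, direct-limit argument is the same mechanism the paper uses in Proposition~\ref{fgperfect}(1), and both of your versions in fact produce a finitely generated non-sofic subgroup.
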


\begin{proof}
Suppose that every countable subgroup of $G$ is sofic. Let $F \subseteq G$ be finite and let $\varepsilon>0$. Then the subgroup $\langle F\rangle$ generated by $F$ is countable, hence sofic. By Theorem~\ref{soficapprox}, there exists an $(F,\varepsilon)$-approximation for $\langle F\rangle$, and therefore for $G$ on the set $F$. Since $F$ and $\varepsilon$ were arbitrary, Theorem~\ref{soficapprox} implies that $G$ is sofic. This contradiction shows that $G$ contains a countable non-sofic subgroup.
\end{proof}

In particular, Proposition \ref{loc} shows that soficity is a local property. For that reason, we may restrict our attention to countable groups. Indeed, by \cite[Theorem 2]{elsza2} or by \cite[Proposition 2.2.5]{CapraroLupini2015}, every countable discrete sofic group is hyperlinear. One may also discuss \textit{minimal non-hyperlinear groups}, but we will not do that in the present paper.
\begin{prop}\label{fgperfect}
Let $G$ be a minimal non-sofic group. Then the following hold:
\begin{enumerate}
    \item $G$ is finitely generated. In particular, $G$ has maximal subgroups.
    \item $G$ has no non-trivial amenable quotients. In particular, $G$ is perfect.
    \item $G$ has maximal normal subgroups.
\end{enumerate}
\end{prop}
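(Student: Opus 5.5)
We prove the three items in order, using the local nature of soficity (Proposition~\ref{loc} and Theorem~\ref{soficapprox}) together with the closure properties in Proposition~\ref{temel}.

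For (1), suppose $G$ is not finitely generated. Then every finitely generated subgroup $\langle F\rangle$ is proper, hence sofic. The argument of Proposition~\ref{loc} then applies verbatim. Given a finite $F\subseteq G$ and $\varepsilon>0$, an $(F,\varepsilon)$-approximation for $\langle F\rangle$ is also one for $G$. So Theorem~\ref{soficapprox} makes $G$ sofic, which is a contradiction. Alternatively, $G$ is the direct limit of its finitely generated subgroups, and we may cite closure under direct limits. Hence $G$ is finitely generated. It is non-trivial, since the trivial group is sofic. The existence of maximal subgroups is then the standard Zorn's lemma argument. Let $G=\langle g_1,\dots,g_k\rangle$. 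The union of a chain of proper subgroups cannot contain all of $g_1,\dots,g_k$, since otherwise some member of the chain would contain all of them and so equal $G$. Hence every chain of proper subgroups (for instance one containing the trivial subgroup) has a proper upper bound, and Zorn's lemma gives a maximal subgroup.

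For (2), let $N\trianglelefteq G$ with $G/N$ amenable and $N\neq G$. Then $N$ is a proper subgroup, so it is sofic. By Proposition~\ref{temel}(5), an extension of the sofic group $N$ by the amenable group $G/N$ is sofic. Thus $G$ would be sofic, a contradiction. Hence every amenable quotient of $G$ is trivial. Since $G/[G,G]$ is abelian and therefore amenable, we get $G=[G,G]$, so $G$ is perfect. The only point to watch is the direction of the extension in Proposition~\ref{temel}(5): we need "sofic normal subgroup with amenable quotient implies sofic", which is the standard form of that statement (Elek--Szabó), so I would read item (5) that way.

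For (3), repeat the Zorn argument from (1) in the poset of proper normal subgroups. This poset is non-empty since it contains $\{1\}$. A chain of proper normal subgroups has as its union a normal subgroup. This union is proper, because finite generation forces any union equal to $G$ to be attained at some stage of the chain. Zorn's lemma therefore gives a maximal normal subgroup $M$, and $G/M$ is simple. I do not expect any real obstacle. The only subtle points are the finite-generation step in the Zorn arguments and applying the extension closure in the correct direction.
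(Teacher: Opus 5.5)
Your proof is correct and follows the paper's argument essentially step for step. In (1) you use local/direct-limit closure, in (2) extension by an amenable quotient, and in (3) Zorn's lemma together with finite generation. The differences are cosmetic. In (1) you skip the paper's preliminary step of showing $G$ is countable and enumerating it, and instead apply the local criterion (or the direct limit over all finitely generated subgroups) directly. You also prove that maximal subgroups exist by the Zorn argument, correctly noting $G\neq 1$, rather than citing B.~H.~Neumann's theorem.
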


\begin{proof}
\begin{enumerate}
    \item By Proposition~\ref{loc}, the group $G$ is countable. Suppose that $G$ is not finitely generated. Write
    \[
    G=\{g_i : i \in \mathbb{N}\},
    \]
    and let
    \[
    G_i=\langle g_1,\dots,g_i\rangle.
    \]
    Then each $G_i$ is finitely generated and proper, hence sofic by minimality. Since $G$ is the direct limit of the groups $G_i$, Proposition~\ref{temel} implies that $G$ is sofic, a contradiction. Therefore, $G$ is finitely generated.

    By \cite[(5) Theorem]{neu37}, every proper subgroup of a finitely generated group is contained in a maximal subgroup.
    
    \item Let $N \trianglelefteq G$ be such that $G/N$ is amenable. If $N$ is proper, then $N$ is sofic by minimality, and therefore $G$ is sofic, since an extension of a sofic group by an amenable group is sofic. This contradiction shows that $N=G$. Hence $G$ has no non-trivial amenable quotients.

    Since the abelianization $G/[G,G]$ is amenable, it follows that $G/[G,G]=1$. Therefore $G=[G,G]$, and so $G$ is perfect.

    \item  Let
\[
\mathcal{N}=\{N\trianglelefteq G : N\neq G\},
\]
partially ordered by inclusion. Since $\{1\}\in\mathcal{N}$, the set $\mathcal{N}$ is non-empty. Let $\mathcal{C}=\{N_i\}_{i\in I}$ be a chain in $\mathcal{N}$, and set
\[
N=\bigcup_{i\in I} N_i.
\]
Then $N$ is a subgroup of $G$, and since each $N_i$ is normal and the family is linearly ordered by inclusion, $N$ is normal in $G$. 

If $N=G$, then since $G$ is finitely generated, all generators of $G$ lie in some $N_i$, which implies $N_i=G$, a contradiction. Hence $N\neq G$, so $N\in\mathcal{N}$. Therefore every chain in $\mathcal{N}$ has an upper bound, and by Zorn's lemma, $G$ has a maximal normal subgroup.
\end{enumerate}
\end{proof}

Additional structure can be derived for maximal normal subgroups.

\begin{prop}
Let $G$ be a minimal non-sofic group. Then $G$ has a maximal normal subgroup $M$ such that $G/M$ is a finitely generated non-amenable simple group.
\end{prop}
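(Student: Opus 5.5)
By Proposition~\ref{fgperfect}(3), the minimal non-sofic group $G$ has a maximal normal subgroup $M$; we fix one and check the three properties of $G/M$ in turn.

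\emph{Simplicity.} Normal subgroups of $G/M$ correspond to normal subgroups of $G$ containing $M$. Since $M$ is maximal among proper normal subgroups, the only such subgroups are $M$ and $G$. Also $M\neq G$, so $G/M$ is non-trivial. Hence $G/M$ is simple.

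\emph{Finite generation.} By Proposition~\ref{fgperfect}(1), $G$ is finitely generated. The images of a finite generating set of $G$ generate $G/M$, so $G/M$ is finitely generated.

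\emph{Non-amenability.} Suppose $G/M$ were amenable. Since $M\neq G$, it would be a non-trivial amenable quotient of $G$. This contradicts Proposition~\ref{fgperfect}(2). Concretely, $M$ is a proper subgroup, hence sofic by minimality, and Proposition~\ref{temel}(5) would then make $G$ sofic. So $G/M$ is non-amenable. The same fact also rules out $G/M$ being finite or abelian, which is consistent with $G$ being perfect.

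The only step with real content is the existence of $M$, and it is already covered by Proposition~\ref{fgperfect}(3), whose Zorn's lemma argument uses finite generation of $G$. So no step here is a genuine obstacle. One might be tempted to prove instead that every maximal normal subgroup has this property, but the statement only asks for one such $M$. (That stronger version in fact also follows from the same argument, since nothing above depended on the choice of $M$.)
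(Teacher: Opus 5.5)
Your proof is correct and follows essentially the same route as the paper: simplicity from maximality of $M$, non-amenability from closure of soficity under extensions by amenable groups (Proposition~\ref{temel}), and finite generation inherited from $G$. You also make explicit that the existence of $M$ comes from Proposition~\ref{fgperfect}(3), which the paper's proof uses only implicitly.
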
 

\begin{proof}
Since $M$ is maximal, the quotient $G/M$ is simple. By Proposition~\ref{temel}, it is non-amenable. Since $G$ is finitely generated, by \cite[1.4 A]{robb1}, every quotient of $G$ is finitely generated.
\end{proof}

\begin{prop}\label{conjinf}
Let $G$ be a minimal non-sofic group. Then, for every non-central element $x$, the conjugacy class $x^G$ is infinite.
\end{prop}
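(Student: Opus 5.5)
Suppose $x\in G$ is non-central and $x^G$ is finite; we will derive a contradiction with non-soficity. By the orbit--stabilizer correspondence, $|x^G| = |G : C_G(x)|$, so $C=C_G(x)$ has finite index in $G$. Since $x$ is not central, $C\neq G$, so $C$ is a proper subgroup. By minimality of $G$, $C$ is sofic.

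Next we pass to a normal subgroup of finite index. Let $N=\operatorname{core}_G(C)=\bigcap_{g\in G} C^g$. Because $C$ has finite index, it has only finitely many conjugates, so $N$ is normal of finite index in $G$. Moreover $N\le C$ and $N\neq G$, so $N$ is sofic: it is a proper subgroup, or alternatively a subgroup of the sofic group $C$, using closure under subgroups in Proposition~\ref{temel}(1).

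Now $G/N$ is a finite group, hence amenable. Since $N$ is sofic, $G$ is an extension of a sofic group by an amenable group, and therefore sofic by Proposition~\ref{temel}(5). This contradicts the assumption that $G$ is non-sofic. Equivalently, $G/N$ is a non-trivial amenable quotient, which is impossible by Proposition~\ref{fgperfect}(2). Hence $x^G$ must be infinite.

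There is no serious obstacle here. The only point needing care is that $N$ is a proper normal subgroup. This follows from $N\le C<G$, which uses exactly that $x$ is non-central.
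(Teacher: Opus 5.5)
Your proof is correct and follows essentially the same route as the paper. A finite conjugacy class gives a proper finite-index centralizer, whose normal core is a proper normal subgroup of finite index, and this contradicts non-soficity via closure under extensions by finite (amenable) groups; you also make explicit the use of non-centrality to ensure properness, which the paper leaves implicit.
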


\begin{proof}
Assume that some element has only finitely many conjugates. Then $C_G(x)$ has finite index, hence $G$ has a normal subgroup of finite index by \cite[Theorem 3.14]{rotman}, contradicting Proposition~\ref{temel}.
\end{proof}

The following proposition is immediate.

\begin{prop}\label{finin}
Every finite normal subgroup of a minimal non-sofic group is central.
\end{prop}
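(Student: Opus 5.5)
Let $N$ be a finite normal subgroup of the minimal non-sofic group $G$. The plan is to show that every element $x \in N$ has only finitely many conjugates, and then invoke Proposition~\ref{conjinf}. That proposition says a non-central element of a minimal non-sofic group has an infinite conjugacy class. So any element with a finite conjugacy class must be central.

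The first step is the finiteness of conjugacy classes, which is immediate. Since $N$ is normal, for every $x\in N$ and $g\in G$ we have $x^g\in N$. Hence $x^G\subseteq N$, and $x^G$ is finite because $N$ is finite.

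By Proposition~\ref{conjinf}, $x$ therefore cannot be non-central, so $x\in Z(G)$. As $x\in N$ was arbitrary, this gives $N\le Z(G)$.

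An alternative route, which avoids quoting Proposition~\ref{conjinf}, runs as follows. Conjugation gives a homomorphism $G\to \Aut(N)$, and its kernel is $C_G(N)$. Since $\Aut(N)$ is finite, $G/C_G(N)$ is a finite, hence amenable, quotient of $G$. By part (2) of Proposition~\ref{fgperfect}, $G$ has no non-trivial amenable quotients, so $C_G(N)=G$, that is, $N$ is central.

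I expect no real obstacle in either argument. The only point to check is that Proposition~\ref{conjinf} applies to every non-central element without further hypotheses, which it does as stated. I would present the second argument, since it rests directly on Proposition~\ref{fgperfect} via the extension-by-amenable closure property in Proposition~\ref{temel}.
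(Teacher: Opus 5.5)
Your proposal is correct, and the argument you say you would present is exactly the paper's proof: $G/C_G(N)$ embeds in the finite group $\Aut(N)$, so it is an amenable quotient, which Proposition~\ref{fgperfect}(2) forces to be trivial. Your first route via Proposition~\ref{conjinf} is also valid, and it comes down to the same finite-quotient argument applied to each $C_G(x)$ through its normal core.
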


\begin{proof}
Let $G$ be a minimal non-sofic group and let $N$ be a finite normal subgroup. Clearly, $C_G(N)$ is also normal and $G/C_G(N)$ embeds in $\Aut(N)$, which is finite. This contradicts the fact that $G$ has no amenable images. Therefore, $C_G(N)=G$.
\end{proof}

\begin{remark}
For every minimal non-sofic group $G$ and every maximal normal subgroup $M$, the quotient $G/M$ is non-amenable and simple. Since $G$ is finitely generated, $G/M$ is finitely generated.
\end{remark}





\section{Existentially closed non-sofic groups and $\omega$-non-soficity}

In this section, we show that the existence of a non-sofic group leads naturally to the existence of $\omega$-non-sofic groups with a rich internal structure. More precisely, we prove that, if a non-sofic group exists, then there is a countable existentially closed group containing it whose centralizers form a densely ordered chain of non-sofic subgroups of order type $(\mathbb{Q},\leq)$.

\begin{prop}\label{ecns}
Assume that there exists a non-sofic group. Then there exists a countable $\aleph_0$-existentially closed non-sofic group $G$.
\end{prop}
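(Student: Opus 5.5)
We first reduce to a countable group, then embed it into a countable existentially closed group, and finally observe that non-soficity passes to overgroups. By Proposition~\ref{loc}, the assumed non-sofic group contains a countable non-sofic subgroup $H$.

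\textbf{Embedding step.} We use the classical theorem that every countable group embeds in a countable existentially closed group. Here ``$\aleph_0$-existentially closed'' means existentially closed with respect to finite systems of equations and inequations with constants from the group. The construction is the standard one. Enumerate all pairs consisting of a finite system (with constants from the current group) and a tuple of variables. Then build an ascending chain
\[
H=G_0\le G_1\le G_2\le\cdots
\]
of countable groups. At stage $n$ we handle the $n$-th system by a bookkeeping (dovetailing) enumeration, so that every system over every $G_m$ is eventually treated. If the system has a solution in some overgroup of $G_n$, we let $G_{n+1}$ be a countable subgroup of that overgroup containing $G_n$ together with a solution. Downward Löwenheim--Skolem, or simply the subgroup generated by $G_n$ and the finitely many solution elements, keeps $G_{n+1}$ countable. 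Otherwise we put $G_{n+1}=G_n$. Let
\[
G=\bigcup_n G_n,
\]
which is countable.

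To see that $G$ is existentially closed, take any finite system with constants in $G$ that is solvable in some overgroup $K\ge G$. Its finitely many constants lie in some $G_m$, and $K\ge G_m$. So the system was solvable over $G_m$ and was treated at some later stage $n\ge m$. At that stage a solution was added to $G_{n+1}\le G$. Alternatively, one may simply cite the standard reference for this result (Higman--Scott, \emph{Existentially closed groups}, or Neumann's original construction) rather than redoing it.

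\textbf{Non-soficity.} Since $H\le G$ and the class of sofic groups is closed under subgroups (Proposition~\ref{temel}(1)), $G$ cannot be sofic: otherwise $H$ would be sofic. Hence $G$ is a countable $\aleph_0$-existentially closed non-sofic group.

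The main obstacle is only the bookkeeping in the embedding step. We must ensure that systems with constants from later stages are also handled. This is what the dovetailed enumeration of (stage, system) pairs guarantees, and it works because each $G_n$ has only countably many finite systems. The non-soficity part is immediate.
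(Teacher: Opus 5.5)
Your proposal is correct and follows essentially the same route as the paper, whose proof is just a citation of the standard embedding of a countable group into a countable existentially closed group. You additionally make explicit the steps the paper leaves implicit: the reduction to a countable non-sofic subgroup via Proposition~\ref{loc}, the dovetailed chain construction (which is handled correctly), and the passage of non-soficity to the overgroup via closure of soficity under subgroups.
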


\begin{proof}
This follows from standard embedding results for existentially closed groups; see, for instance, \cite{bek}.
\end{proof}

In this section from now on, let $G$ be a countable $\aleph_0$-existentially closed non-sofic group.

\begin{prop}\label{ecomega}
The group $G$ is $\omega$-non-sofic.
\end{prop}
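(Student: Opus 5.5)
Since soficity is local (Proposition~\ref{loc}) and $G$ is countable, there is a finitely generated non-sofic subgroup $H\le G$: if every finitely generated subgroup were sofic, the direct-limit closure in Proposition~\ref{temel} would make $G$ sofic. We fix a finite generating set $h_1,\dots,h_k$ of $H$. The strategy is to use existential closedness to produce a sequence of non-sofic subgroups, each conjugate to (hence isomorphic to, hence non-sofic like) $H$, nested strictly downward. The chain will be built through centralizers, matching the theme of this section.

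\textbf{Step 1: every finitely generated subgroup has a strictly smaller isomorphic copy in its own world.} The standard fact for existentially closed groups is that for any finitely generated $K\le G$, the centralizer $C_G(K)$ is large: it contains, for instance, a copy of every countable group, in particular of $H$. Indeed, in $G\times H$ (or a free product with amalgamation) there is a copy of $H$ commuting with $K$. This is an existential statement about the finitely many generators of $K$ with parameters, so it is witnessed in $G$ itself by $\aleph_0$-existential closedness. Equivalently, we embed $G$ into $G\times H$ and pull back the system of equations $[x_i,a]=1$ for $a$ in a generating set of $K$, together with the relations of $H$ on the $x_i$ and the inequations that hold in $H$. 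Strictly speaking, $H$ may not be finitely presented. To handle this we use the stronger standard fact that in an ($\aleph_0$-)existentially closed group, every finitely generated subgroup of $G$ embeds into $C_G(K)$, because $G$ is then "$H$-closed" under HNN-type conjugation. Concretely, we use the HNN extension $\langle G,t \mid t^{-1}h_it=h_i'\rangle$, where $h_i'$ are the generators of a copy of $H$ inside $G\times H$ commuting with $K$. This produces an element $t\in G$ with $H^t\le C_G(K)$ from finitely many equations in the parameters $h_i$ and $K$.

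\textbf{Step 2: build the chain.} We set $K_0=H$ and $C_0=C_G(H)$. By Step 1 there is $t_1\in G$ with $H_1:=H^{t_1}\le C_G(H)$; this $H_1$ is non-sofic. Define $L_n=\langle H,H_1,\dots,H_n\rangle$, obtained inductively by choosing $H_{n+1}=H^{t_{n+1}}\le C_G(L_n)$. Then
\[
C_G(L_0)\ \ge\ C_G(L_1)\ \ge\ C_G(L_2)\ \ge\cdots,
\]
and each $C_G(L_n)$ contains $H_{n+1}\cong H$, so it is non-sofic by Proposition~\ref{temel}(1). For strictness, we note that $H_{n+1}\le C_G(L_n)$. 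However, $H_{n+1}\not\le C_G(L_{n+1})$, since that would force $H_{n+1}$ to be abelian; but $H$ is non-sofic, hence non-amenable, hence not abelian. Thus $C_G(L_n)> C_G(L_{n+1})$ is a strictly descending chain of non-sofic subgroups, and $G$ is $\omega$-non-sofic.

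The main obstacle is Step 1, namely justifying that the required embedding statement is expressible by finitely many equations and inequations over $G$, so that $\aleph_0$-existential closedness applies even when $H$ is not finitely presented. If the HNN trick is not clean, the fallback is the classical theorem (e.g.\ in \cite{bek}) that in an existentially closed group the centralizer of any finitely generated subgroup contains copies of all finitely generated subgroups of $G$. Everything after that is formal.
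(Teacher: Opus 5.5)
Your argument is correct, but it takes a different route from the paper. The paper simply invokes \cite[Theorem C]{bek}. That theorem gives finitely generated non-abelian free subgroups $A_q$ ($q\in\mathbb{Q}$) whose centralizers $C_G(A_q)$ form a chain of order type $(\mathbb{Q},\leq)$, each isomorphic to $G$. A strictly decreasing sequence of rationals then yields the required chain.

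You instead build an $\omega$-chain by hand. You fix a finitely generated non-sofic $H\le G$ and, at each step, use existential closedness once to find $t\in G$ with $H^t\le C_G(L_n)$. What makes this work cleanly is the following. The only equations needed are the finitely many commutator equations $[t^{-1}h_it,a]=1$, for $a$ in a finite generating set of $L_n$. These are solved in the HNN extension of $G\times H$ identifying $H\times 1$ with $1\times H$. The isomorphism $H^t\cong H$ then comes for free because $t\in G$, so no presentation of $H$ has to be encoded. Your strictness argument, via \emph{non-sofic implies non-abelian}, is also correct.

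Your route is elementary and self-contained. It uses only the definition of existential closedness for finite systems, it does not even need countability, and each term is non-sofic simply because it contains a conjugate of $H$, not because $C_G(A)\cong G$. The paper's route relies on a substantial external theorem, but in return it yields more: a dense chain of type $\mathbb{Q}$ whose members are all isomorphic to $G$, and hence are themselves existentially closed.

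There are a few expository slips to fix. Your side remark that $C_G(K)$ contains a copy of every countable group is false. A countable group has only countably many finitely generated subgroups, while there are $2^{\aleph_0}$ pairwise non-isomorphic two-generator groups. Your first attempt, pulling back the relations of $H$, is unnecessary once you use conjugation. Finally, the HNN extension should have base $G\times H$ rather than $G$, since the elements $h_i'$ live in the second factor.
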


\begin{proof}
By \cite[Theorem C]{bek}, there exists a family $\{A_q\}_{q\in\mathbb{Q}}$ of finitely generated non-abelian free subgroups of $G$ such that the family
\[
\{C_G(A_q)\}_{q\in\mathbb{Q}}
\]
is linearly ordered by inclusion with order type $(\mathbb{Q},\leq)$. Moreover, each $C_G(A_q)$ is isomorphic to $G$ by \cite[Theorem C]{bek}. Since $G$ is non-sofic, it follows that each $C_G(A_q)$ is non-sofic.

Now choose a strictly descending sequence in $(\mathbb{Q},\leq)$. The corresponding sequence of centralizers is then a strictly descending chain of non-sofic subgroups of $G$. Therefore, $G$ is $\omega$-non-sofic.
\end{proof}

\begin{prop}\label{centralizersstructure}
Let $A \leq G$ be a finitely generated subgroup. Then $C_G(A)$ is non-sofic. Moreover, if $Z(A)=1$, then
\[
C_G(A)\cong G.
\]
In particular, in this case $C_G(A)$ is a countable $\aleph_0$-existentially closed non-sofic group.
\end{prop}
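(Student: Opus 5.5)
We need to show that for a finitely generated $A \le G$, the centralizer $C_G(A)$ is non-sofic, and that $C_G(A)\cong G$ when $Z(A)=1$. The plan is to handle the centerless case first, using the structure theory of countable $\aleph_0$-existentially closed groups from \cite{bek}, and then reduce the general case to it.

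\textbf{Step 1: the case $Z(A)=1$.} We invoke the standard fact, recorded in \cite{bek} and already used in the proof of Proposition~\ref{ecomega}, that in a countable $\aleph_0$-existentially closed group the centralizer of a finitely generated centerless subgroup is isomorphic to $G$. The underlying argument runs as follows.

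\begin{itemize}
\item Every finitely generated subgroup $B\le G$ can be conjugated into $C_G(A)$. Indeed, one forms $G * \langle t\rangle$ (or a suitable HNN extension) in which some $t$ conjugates $B$ into a copy commuting with $A$. Existential closedness then realizes this system of equations, with parameters from $A$ and $B$, inside $G$.
\item $C_G(A)$ is itself $\aleph_0$-existentially closed. Any finite system solvable over $C_G(A)$ can be solved in an overgroup built by amalgamating $G$ with that overgroup over $C_G(A)$. The requirement that the solution commute with $A$ is added as extra equations. Here $Z(A)=1$ ensures $A\cap C_G(A)=1$, so no unwanted identification occurs.
\item Countable $\aleph_0$-existentially closed groups with the same finitely generated subgroups (the same ``skeleton'') are isomorphic by a back-and-forth argument. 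Hence $C_G(A)\cong G$.
\end{itemize}

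In particular, $C_G(A)$ is then non-sofic and countable $\aleph_0$-existentially closed, which gives the ``in particular'' clause of the statement.

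\textbf{Step 2: the general case.} Choose a finitely generated centerless subgroup $A'\ge A$ of $G$. For example, take $A'=A*F$ with $F$ free of rank $2$. Such a subgroup exists in $G$ because $G$ contains a copy of it commuting appropriately, again by existential closedness. Alternatively, take $A'=\langle A, a,b\rangle$ with $a,b$ chosen generically so that $A'\cong A*F_2$; this group is centerless. Then $C_G(A')\le C_G(A)$. By Step 1, $C_G(A')\cong G$ is non-sofic. Since soficity passes to subgroups (Proposition~\ref{temel}), $C_G(A)$ is non-sofic as well.

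\textbf{Main obstacle.} The delicate point is Step 1, namely checking that $C_G(A)$ inherits $\aleph_0$-existential closedness and realizes every finitely generated subgroup of $G$. This requires building the amalgams correctly, and I would cite \cite[Theorem C]{bek} for it rather than reprove it. Step 2, finding a centerless finitely generated $A'\supseteq A$ inside $G$, is routine: the required equations are solvable in $G*F_2$, so existential closedness gives a solution in $G$.
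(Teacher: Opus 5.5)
Step 1 matches the paper, which simply cites \cite[Theorem B]{bek} for $C_G(A)\cong G$ when $Z(A)=1$. The gap is in Step 2. Existential closedness only lets you realize in $G$ a \emph{finite} system of equations and inequations with constants from $G$. The condition $\langle A,a,b\rangle\cong A*F_2$ needs infinitely many inequations $w(a_1,\dots,a_n,a,b)\neq 1$. Likewise, $Z(\langle A,a,b\rangle)=1$ amounts to infinitely many conditions, one for each word in $A$, $a$, $b$. So a solution in $G$ of any finite fragment may generate a subgroup with non-trivial centre, and existential closedness gives no meaning to ``chosen generically.'' The existence of such an $A'$ is in fact true, but it needs real input: results of Neumann--Macintyre--Ziegler type on which finitely generated groups embed over $A$ into an e.c.\ group containing $A$. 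It is not routine.

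You can avoid Step 2 altogether, because the first bullet of Step 1 never uses $Z(A)=1$. Take any $A=\langle a_1,\dots,a_n\rangle$ and any finitely generated $B=\langle b_1,\dots,b_m\rangle\le G$. Let $L$ be the HNN extension of $G\times B$ with stable letter $t$ conjugating $B\times 1$ onto $1\times B$. Note that $G*\langle t\rangle$ does not work, since there $B^t$ does not commute with $A$. In $L$ the finite system $[a_i,t^{-1}b_jt]=1$ has a solution, so it has one in $G$, and then $B^t\le C_G(A)$. Since soficity is local, you may take $B$ to be a finitely generated non-sofic subgroup of $G$. Then $C_G(A)$ is non-sofic for every finitely generated $A$. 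This is essentially the paper's argument, which cites \cite[Corollary 3.3]{bek} for the fact that all of $G$ embeds in $C_G(A)$.

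A smaller point concerns your second bullet. Amalgamating $G$ with the overgroup $K$ over $C_G(A)$ alone does not make the new solutions commute with $A$. The right overgroup is $G*_{A\times C_G(A)}(A\times K)$. This is exactly where $Z(A)=A\cap C_G(A)=1$ is needed, so that $AC_G(A)\cong A\times C_G(A)$.
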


\begin{proof}
By \cite[Corollary 3.3]{bek}, the group $G$ embeds into $C_G(A)$. Since $G$ is non-sofic and soficity is closed under taking subgroups, it follows that $C_G(A)$ is non-sofic.

If, in addition, $Z(A)=1$, then $C_G(A)\cong G$ by \cite[Theorem B]{bek}. The final statement follows immediately.
\end{proof}

\begin{prop}\label{simplequotients}
Let $A \leq G$ be a finitely generated subgroup. Then the quotient $C_G(A)/Z(A)$ is simple. Moreover, if $Z(A)=1$, then
\[
C_G(A)/Z(A)\cong G.
\]
\end{prop}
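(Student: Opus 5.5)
The plan is to derive both assertions from the structure theory of existentially closed groups in \cite{bek}, in the same way as Propositions~\ref{ecomega} and~\ref{centralizersstructure}.

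First, the case $Z(A)=1$. Then $C_G(A)/Z(A)=C_G(A)$, and Proposition~\ref{centralizersstructure} (that is, \cite[Theorem B]{bek}) gives $C_G(A)\cong G$. So the second assertion is immediate. For simplicity in this case it remains to show that $G$ is simple. This is classical for existentially closed groups. Given $1\neq x$ and any $y\in G$, the equation $y=t^{-1}xt$ is solvable in $G$ when $y\neq 1$ has the same order as $x$. In general one writes $y$ as a product of conjugates of $x$: in an overgroup one can solve $y=[x,t]$-type equations, for instance via a free product with amalgamation or an HNN extension. Existential closedness then transfers the solution back into $G$. Hence the normal closure of $x$ is $G$.

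Second, general finitely generated $A$. Note that $Z(A)\le C_G(A)$, and $Z(A)$ is central in $C_G(A)$, because $C_G(A)$ commutes with $A\supseteq Z(A)$. So the quotient makes sense. Moreover $Z(A)\subseteq A\cap C_G(A)=Z(A)$, so $Z(A)=A\cap C_G(A)$. The key step is to show:
\begin{itemize}
\item[(a)] every element of $C_G(A)$ outside $Z(A)$ normally generates $C_G(A)$ modulo $Z(A)$;
\item[(b)] every element of $C_G(A)$ is a product of conjugates, by elements of $C_G(A)$, of a fixed $x\in C_G(A)\setminus Z(A)$, up to a factor in $Z(A)$.
\end{itemize}
To do this, take $x,y\in C_G(A)$ with $x\notin Z(A)$. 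Form the overgroup $H=\langle G,t\mid t \text{ centralizes } A\rangle$, an HNN extension of $G$ with associated subgroup $A$ and the identity isomorphism. Also use amalgams over $\langle A,x\rangle$. In $H$, find elements commuting with $A$ that conjugate products of $x^{\pm1}$ into $yz$ for a suitable $z\in Z(A)$. These conditions form a finite system of equations and inequations with constants from the finitely generated group $\langle A,x,y\rangle$. Existential closedness pulls the solutions back into $G$. Since the commutation with the generators of $A$ is part of the system, the solutions lie in $C_G(A)$.

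An alternative, if \cite{bek} states it directly, is to cite the theorem there that $C_G(A)/Z(A)$ is isomorphic to (or embeds suitably in) an existentially closed group. Simplicity then follows from the simplicity of existentially closed groups.

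The main obstacle is constructing the overgroup in (a)/(b). One must ensure that $x$ stays outside the center modulo $Z(A)$ there, so that its normal closure can reach $y$. The point is that $x\notin A$-centralizing-center forces $x\notin Z(\langle A,x\rangle)$ only when $x\notin A$. So one must handle separately the case $x\in A\cap C_G(A)=Z(A)$, which is excluded. For $x\notin A$, I would use a free product of $\langle A,x,y\rangle$-containing groups amalgamated over $A$, where $x$ and $y$ become conjugate modulo $Z(A)$. In the general theory this is where the identity $C_G(C_G(A))=\langle A,\,\cdot\,\rangle$-type double-centralizer results of \cite{bek} would be invoked.
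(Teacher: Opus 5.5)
Your treatment of the isomorphism $C_G(A)\cong G$ for $Z(A)=1$ matches the paper, which cites \cite[Theorem B]{bek}. Deducing simplicity in that case from the classical simplicity of existentially closed groups is also fine. The general simplicity claim, however, is not proved: the paper simply cites \cite[Proposition 3.7]{bek}, and you leave the overgroup in (a)/(b) unconstructed. The constructions you sketch would not work as stated:
\begin{enumerate}
\item The HNN extension in which $t$ centralizes $A$ imposes no relation between $t^{-1}xt$ and $y$.
\item An amalgam over $A$ or $\langle A,x\rangle$ cannot impose relations involving $y$, which lies outside the amalgamated subgroup. Nor does it force the new elements to commute with $A$.
\item You cannot in general make $x$ and $y$ conjugate modulo $Z(A)$, since their images in $C_G(A)/Z(A)$ may have different orders.
\end{enumerate}
The obstacle you name is also misidentified. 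Since $x\in C_G(A)$, the element $x$ is always central in $\langle A,x\rangle$; the only relevant hypothesis is $x\notin Z(A)$. Finally, your fallback of embedding $C_G(A)/Z(A)$ into an existentially closed group gives nothing, because subgroups of simple groups need not be simple.

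The missing idea is to work in overgroups in which $Z=Z(A)$ stays central, and then reattach $A$ by a central product. Set $L=\langle A,x,y\rangle$ and $D=Z\langle x,y\rangle=C_L(A)$. Since $A\cap D=Z$, we have $L\cong A\circ_Z D$. Build an overgroup $P\geq D$ with $Z$ central, using only amalgams over $Z$ and HNN extensions whose associated isomorphisms fix $Z$ pointwise.
\begin{enumerate}
\item In $P_1=D*_Z(Z\times\langle s\rangle)$ one has $P_1/Z\cong D/Z*\langle\bar s\rangle$. Because $\bar x\neq 1$, the element $w=s^{-1}xsx^{-1}$ has infinite order modulo $Z$.
\item If $y$ also has infinite order modulo $Z$, then $Z\langle w\rangle\cong Z\times\mathbb{Z}\cong Z\langle y\rangle$ via the identity on $Z$ and $w\mapsto y$. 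The corresponding HNN extension makes $y$ a product of two conjugates of $x^{\pm 1}$, by elements centralizing $Z$.
\item If $y\notin Z$ has finite order modulo $Z$, first write $y=(yu)u^{-1}$ with a new letter $u$ adjoined via $Z\times\langle u\rangle$. Both factors then have infinite order modulo $Z$, and you apply the previous step to each.
\end{enumerate}
Now $A\circ_Z P$ contains $L$, and $H=G*_L(A\circ_Z P)$ is an overgroup of $G$ in which the conjugators commute with $A$. Apply existential closedness to the finite system $[t_j,a_i]=1$, $y=\prod_j t_j^{-1}x^{\pm1}t_j$, whose constants are the generators $a_i$ of $A$ together with $x$ and $y$. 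This gives conjugators in $C_G(A)$ itself. Combined with $C_G(A)\neq Z(A)$, which holds since $G$ embeds into $C_G(A)$, this proves simplicity, and the case $Z(A)=1$ is included.
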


\begin{proof}
The simplicity follows from \cite[Proposition 3.7]{bek}, while the second statement follows from \cite[Theorem B]{bek}.
\end{proof}

The key consequence of this section is the following.

\begin{theorem}
If there exists a non-sofic group, then the class of $\omega$-non-sofic groups is non-empty.
\end{theorem}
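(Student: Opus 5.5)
The plan is to chain together the results of this section; essentially no new argument is needed. Assume some non-sofic group $K$ exists.

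First, use Proposition~\ref{loc} to replace $K$ by a countable non-sofic subgroup. This is not strictly required, but it keeps everything inside the countable setting where the embedding results of \cite{bek} apply directly. Next, apply Proposition~\ref{ecns}. This embeds our countable non-sofic group into a countable $\aleph_0$-existentially closed group $G$. The group $G$ is non-sofic because soficity passes to subgroups (Proposition~\ref{temel}(1)): if $G$ were sofic, the embedded copy of $K$ would be sofic as well.

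Finally, invoke Proposition~\ref{ecomega} for this $G$. That result supplies finitely generated free subgroups $A_q$, $q\in\mathbb{Q}$, whose centralizers $C_G(A_q)$ form a chain of order type $(\mathbb{Q},\leq)$, each isomorphic to $G$ and hence non-sofic. Pick a strictly decreasing sequence $q_1>q_2>\cdots$ in $\mathbb{Q}$, for example $q_n=1/n$. This gives a strictly descending chain $C_G(A_{q_1})>C_G(A_{q_2})>\cdots$ of non-sofic subgroups. Hence $G$ is $\omega$-non-sofic, and the class of $\omega$-non-sofic groups is non-empty.

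As an alternative route that avoids relying on the order-type statement, one could iterate Proposition~\ref{centralizersstructure} directly. Choose a centerless finitely generated subgroup $A_1\leq G$ and put $G_1=C_G(A_1)\cong G$. Then choose a centerless finitely generated $A_2\leq G_1$, chosen compatibly so that $C_{G}(\langle A_1,A_2\rangle)$ is properly contained in $G_1$, and continue. This alternative is not needed if Proposition~\ref{ecomega} is taken as given.

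The only delicate point is strictness of the chain, that is, that distinct rationals give genuinely distinct centralizers. That is exactly the content of ``linearly ordered with order type $(\mathbb{Q},\leq)$'' in \cite[Theorem C]{bek}, so the obstacle is discharged by citation rather than computation. If I were to verify it independently, I would use existential closedness: given $C_G(A)\supseteq C_G(B)$, adjoin an element commuting with $A$ but not with $B$ in an overgroup, which is possible since $B\not\leq A$ can be arranged. Such an element then exists in $G$, which forces the inclusion to be proper.
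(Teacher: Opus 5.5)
Your proposal is correct and follows the paper's proof. The paper combines Proposition~\ref{ecns}, which gives a countable $\aleph_0$-existentially closed non-sofic group, with Proposition~\ref{ecomega}, whose centralizer chain of type $(\mathbb{Q},\leq)$ yields a strictly descending chain of non-sofic subgroups; your extra steps (the reduction via Proposition~\ref{loc}, the iterative alternative, the strictness sketch) are harmless but not needed.

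As a side remark, the statement also follows with no machinery at all. If $K$ is non-sofic, then $K\times\mathbb{Z} > K\times 2\mathbb{Z} > K\times 4\mathbb{Z} > \cdots$ is a strictly descending chain of subgroups, each containing a copy of $K$ and hence non-sofic.
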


\begin{proof}
Proposition~\ref{ecomega} shows that, under the existence of non-sofic groups, the class of $\omega$-non-sofic groups is non-empty. In fact, it contains countable existentially closed groups whose centralizers form a densely ordered chain of non-sofic subgroups of order type $(\mathbb{Q},\leq)$, each of them isomorphic to the given non-sofic group.
\end{proof}

\section{Main results on minimal non-sofic groups}

\begin{lemma}
Let \(G\) be a minimal non-sofic group, and let \(H \lhd G\) be a normal subgroup. If \(\operatorname{Out}(H)\) is amenable, then
\[
G = H C_G(H).
\]
\end{lemma}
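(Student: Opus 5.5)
The plan is to build a homomorphism from $G$ into $\operatorname{Out}(H)$ whose kernel is exactly $HC_G(H)$, and then use Proposition~\ref{fgperfect}(2), which says that a minimal non-sofic group has no non-trivial amenable quotients.

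Since $H \lhd G$, conjugation gives a homomorphism
\[
\varphi\colon G \to \Aut(H), \qquad g \mapsto (h \mapsto g^{-1}hg).
\]
Its kernel is $C_G(H)$. Composing with the projection $\Aut(H) \to \operatorname{Out}(H)=\Aut(H)/\operatorname{Inn}(H)$ gives a homomorphism
\[
\psi\colon G \to \operatorname{Out}(H).
\]

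The next step is to identify $\ker\psi$. An element $g$ lies in $\ker\psi$ exactly when conjugation by $g$ on $H$ agrees with conjugation by some $h\in H$. This is equivalent to $gh^{-1}\in C_G(H)$, that is, $g\in C_G(H)H$.

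Since $C_G(H)$ is normal in $G$ (it is the kernel of $\varphi$), the product $C_G(H)H = HC_G(H)$ is a normal subgroup of $G$. We therefore get
\[
G/HC_G(H) \;\cong\; \psi(G) \;\le\; \operatorname{Out}(H).
\]

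Amenability passes to subgroups, so $\psi(G)$ is amenable. Thus $G/HC_G(H)$ is an amenable quotient of $G$. By Proposition~\ref{fgperfect}(2) this quotient is trivial, so $G = HC_G(H)$.

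One could also argue directly: if $HC_G(H)$ were proper, it would be sofic by minimality, and $G$ would be an extension of a sofic group by an amenable group, hence sofic by Proposition~\ref{temel}, a contradiction.

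There is no genuine obstacle here. The only points needing care are the kernel computation and the fact that subgroups of amenable groups are amenable, which is standard for discrete groups.
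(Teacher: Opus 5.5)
Your proposal is correct and is essentially the paper's proof. Both compose the conjugation action with $\Aut(H)\to\operatorname{Out}(H)$ and use that a minimal non-sofic group has no non-trivial amenable quotients to force this map to be trivial. Your identification of its kernel as $HC_G(H)$ just packages the paper's final step, where each conjugation is written as an inner one.
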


\begin{proof}
Consider the conjugation homomorphism
\[
\varphi : G \to \operatorname{Aut}(H),
\]
and let
\[
\bar{\varphi} : G \to \operatorname{Out}(H)
\]
be its composition with the natural quotient map \(\operatorname{Aut}(H) \to \operatorname{Out}(H)\).

Since \(\operatorname{Out}(H)\) is amenable, the image of \(\bar{\varphi}\) is amenable. As \(G\) is minimal non-sofic, it admits no non-trivial amenable quotients. Hence \(\bar{\varphi}\) is trivial.

It follows that for every \(g \in G\), the automorphism of \(H\) induced by conjugation by \(g\) is inner. Thus there exists \(h_g \in H\) such that
\[
g x g^{-1} = h_g x h_g^{-1} \qquad \text{for all } x \in H.
\]
Therefore \(h_g^{-1} g \in C_G(H)\), and hence \(g \in H C_G(H)\). This proves that \(G = H C_G(H)\).
\end{proof}

\begin{theorem}
Let \(G\) be a minimal non-sofic group. Then \(G\) has no infinite simple proper normal subgroup \(N\) such that \(\operatorname{Out}(N)\) is amenable.
\end{theorem}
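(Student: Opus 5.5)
Suppose, towards a contradiction, that $N$ is an infinite simple proper normal subgroup of the minimal non-sofic group $G$ with $\operatorname{Out}(N)$ amenable. The plan is to combine the preceding lemma with minimality. By that lemma, $G = N C_G(N)$. Since $N$ is simple and infinite, it is non-abelian. Hence $Z(N)=1$, and so $N\cap C_G(N) = Z(N) = 1$. Moreover, $C_G(N)$ is normal in $G$, being the centralizer of a normal subgroup. Therefore
\[
G = N \times C_G(N)
\]
is an internal direct product.

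Next we use minimality. The subgroup $N$ is proper by hypothesis, so it is sofic. If $C_G(N)$ is also proper, then it is sofic as well. Then $G$, being a direct product of two sofic groups, is sofic by Proposition~\ref{temel}(3), which is a contradiction. So it remains to rule out $C_G(N)=G$. In that case $N \le Z(G)$, so $N$ is abelian. This contradicts the fact that $N$ is infinite simple and hence non-abelian. (Alternatively, $G=N\times C_G(N)$ with $C_G(N)=G$ forces $N=1$, contradicting infinitude.)

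Thus both factors are proper, hence sofic, and $G$ is sofic, contradicting the assumption that $G$ is non-sofic. The only step needing care is the verification that the decomposition $G=NC_G(N)$ is direct. This rests on $Z(N)=1$, which holds because a simple group with non-trivial center is abelian and hence cyclic of prime order, hence finite. The infiniteness hypothesis is used exactly here, and also to exclude the degenerate case $C_G(N)=G$. I expect no serious obstacle beyond applying the lemma correctly; the closure of soficity under direct products is already recorded in Proposition~\ref{temel}.
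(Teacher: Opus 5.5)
Your proof is correct and follows the paper's argument essentially step for step. You apply the lemma to get $G = N C_G(N)$, use $Z(N)=1$ to make the product direct, note that both factors are proper (the centralizer because $N$ is non-abelian) and hence sofic, and conclude by closure of soficity under direct products.
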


\begin{proof}
Assume that \(H \lhd G\) satisfies the above conditions. By the lemma,
\[
G = H C_G(H).
\]
Since \(H\) is an infinite simple group, \(Z(H)=1\), and hence
\[
H \cap C_G(H) = 1.
\]
Thus
\[
G \cong H \times C_G(H).
\]

The subgroup \(C_G(H)\) is normal in \(G\). It cannot be equal to \(G\), since otherwise \(H \subseteq Z(G)\), contradicting the fact that \(H\) is non-abelian. Hence \(C_G(H)\) is a proper normal subgroup of \(G\), and therefore sofic.

By minimality of $G$, one has \(H\)  sofic. It follows that \(G\) is sofic, as a direct product of sofic groups, which leads to  a contradiction.
\end{proof}
\begin{lemma}\label{locsimp} Let $G$ be a simple group of Lie type over a locally finite field. Then $\operatorname{Out} G$ is amenable.
\end{lemma}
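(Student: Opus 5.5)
The plan is to use the classical structure theorem for automorphisms of simple groups of Lie type. Write $G = X(F)$, where $X$ is a (possibly twisted) Lie type and $F$ is a locally finite field, i.e. an algebraic extension of $\mathbb{F}_p$. By Steinberg's theorem, which extends to arbitrary fields by the work of Steinberg and Borel--Tits on abstract automorphisms of Chevalley groups, every automorphism of $G$ is a product of an inner automorphism, a diagonal automorphism, a field automorphism and a graph automorphism. This yields a subnormal series of $\operatorname{Aut}G$ whose factors are $\operatorname{Inn}G\cong G$, the group $\operatorname{Outdiag}G$ of diagonal automorphisms modulo inner ones, a quotient of $\operatorname{Aut}(F)$ (field automorphisms), and the finite group $\Gamma$ of graph automorphisms, of order at most $6$. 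Passing to $\operatorname{Out}G$ leaves the last three layers. Amenability is closed under extensions, so it suffices to show each layer is amenable.

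\emph{Graph layer.} $\Gamma$ is finite, hence amenable.

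\emph{Field layer.} $\operatorname{Aut}(F)$ is a quotient of $\operatorname{Gal}(\overline{\mathbb{F}}_p/\mathbb{F}_p)\cong\widehat{\mathbb{Z}}$ restricted to $F$; in fact it is a closed subquotient of $\widehat{\mathbb{Z}}$. In the twisted case it is the centralizer of the twisting automorphism. In every case it is an abelian group, hence amenable as a discrete group.

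\emph{Diagonal layer.} $\operatorname{Outdiag}G$ embeds in $H/H^{\circ}$-type quotients. Concretely, it is isomorphic to a quotient of $\operatorname{Hom}(\Lambda,F^{\times})/\text{(image)}$, where $\Lambda$ is the fundamental group of the root system. Hence it is an abelian group of exponent dividing $|\Lambda|$, and it is amenable. For example, for $\mathrm{PSL}_n(F)$ it is $F^\times/(F^\times)^n$.

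It follows that $\operatorname{Out}G$ has a series $1\le D\le D\Phi\le \operatorname{Out}G$ with abelian or finite factors. So $\operatorname{Out}G$ is solvable-by-finite, in particular elementary amenable, and therefore amenable.

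The main obstacle is justifying the structure theorem for automorphisms over an infinite locally finite field, where the field automorphism group is no longer cyclic and Steinberg's finite-field statement must be replaced by the abstract version. I would cite Steinberg's \emph{Lectures on Chevalley groups} together with Borel--Tits. Alternatively, I would write $G$ as a directed union of finite simple groups $X(\mathbb{F}_{q_i})$ and argue via the finite case, carefully checking that automorphisms respect this filtration. The remaining layers are routine, and the conclusion follows from closure of amenability under extensions.
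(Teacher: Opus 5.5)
Your proposal is correct and follows the paper's own route. Both arguments invoke Steinberg's decomposition (Theorem 30 of his \emph{Lectures on Chevalley groups}) of every automorphism into inner, diagonal, field and graph parts, then observe that the resulting layers of $\operatorname{Out}G$ are abelian or finite, so $\operatorname{Out}G$ is solvable-by-finite and hence amenable.

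Your layer descriptions are in fact more accurate than the paper's. The paper calls the diagonal and graph groups cyclic, which is false for $D_{2m}$ in odd characteristic, where the diagonal part can be $\mathbb{Z}/2\times\mathbb{Z}/2$, and for $D_4$, whose graph group is $S_3$. This does not affect the conclusion.
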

\begin{proof}
By \cite[Theorem 30]{ste}, the outer automorphism group is a product of the groups of diagonal automorphisms, field automorphisms and graph automorphisms. The first and last are cyclic where the second is procyclic, hence the product is solvable. Therefore, it is amenable.\end{proof}
\begin{cor}
Let \(G\) be a minimal non-sofic group. Then \(G\) has no normal subgroup isomorphic to an infinite simple locally finite linear group.
\end{cor}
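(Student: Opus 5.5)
The plan is to reduce the corollary to the preceding theorem, which says that a minimal non-sofic group has no infinite simple proper normal subgroup $N$ with $\operatorname{Out}(N)$ amenable. So suppose $G$ is minimal non-sofic and $N \lhd G$ is isomorphic to an infinite simple locally finite linear group. I will check three things: that $N$ is proper in $G$, that $N$ is infinite simple (given), and that $\operatorname{Out}(N)$ is amenable.

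Properness comes first. If $N=G$, then $G$ would be locally finite, and a locally finite group is amenable, being a direct limit of finite groups. So $G$ would be sofic, a contradiction. Alternatively, one can invoke Proposition~\ref{fgperfect}(1): $G$ is finitely generated, so a locally finite $G$ would be finite, hence sofic. Either way $N$ is a proper normal subgroup.

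For amenability of $\operatorname{Out}(N)$, I use the classification of infinite simple locally finite linear groups. By the results of Belyaev, Borovik, Hartley–Shute and Thomas, every such group is isomorphic to a simple group of Lie type (possibly twisted) over an infinite locally finite field, that is, an infinite subfield of $\overline{\mathbb{F}_p}$. Lemma~\ref{locsimp} then says that $\operatorname{Out}(N)$ is amenable. This is the step I expect to be the main obstacle. The paper has not stated this classification, so it must be cited precisely. One also has to confirm that the description of $\operatorname{Out}$ in Lemma~\ref{locsimp} (diagonal, field and graph automorphisms, with the field part procyclic, hence the whole thing solvable-by-abelian and so amenable) really applies over infinite locally finite fields and not only finite ones. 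If the citation to Steinberg is only for finite fields, I would run the argument through the automorphism theorem for Chevalley groups over arbitrary fields, which gives the same decomposition. There, $\operatorname{Aut}$ of a locally finite field is a closed subgroup of $\widehat{\mathbb{Z}}$, hence abelian, and the diagonal automorphisms form an abelian group.

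With these three facts in hand, the preceding theorem applies to $N$ directly and gives a contradiction. Hence no such normal subgroup exists.
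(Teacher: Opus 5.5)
Your proposal is correct and follows the paper's route: properness of $N$ comes from local finiteness (the paper says $H$ is locally finite, hence sofic), amenability of $\operatorname{Out}(N)$ comes from Lemma~\ref{locsimp}, and the preceding theorem then gives the contradiction; you also make explicit the classification of infinite simple locally finite linear groups as groups of Lie type over infinite locally finite fields, which the paper uses only implicitly. One harmless slip: $\operatorname{Aut}(k)=\operatorname{Gal}(k/\mathbb{F}_p)$ is a quotient of $\widehat{\mathbb{Z}}$, not in general a closed subgroup (it can have torsion), but it is abelian either way, so your conclusion is unaffected.
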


\begin{proof}
Let \(H \lhd G\) be such a subgroup. Then \(H\) is locally finite and hence  it is sofic. Moreover, \(\operatorname{Out}(H)\) is amenable by Lemma \ref{locsimp}. The result follows from the previous theorem.
\end{proof}

Theorem \ref{resfinnormax} is the main theorem of this paper, which gives a very restricted structure for minimal non-sofic groups with finitely generated residually finite maximal normal subgroup. 
\begin{proof}[Proof of Theorem \ref{resfinnormax}]
Let \(G\) be a minimal non-sofic group and let \(M\) be a maximal normal subgroup that is finitely generated and residually finite. Then, by \cite{baum}, \(\operatorname{Aut}(M)\) is residually finite, and therefore so is
\[
G / C_G(M).
\]
On the other hand, by Proposition~\ref{temel}, \(G\), and hence \(G / C_G(M)\), has no finite images. A residually finite group with no finite images is trivial, so \(C_G(M) = G\). Therefore, \(M\) is central.

Since \(G/M\) is a finitely generated simple group and \(M\) is central, it follows that \(G\) is a perfect central extension of a finitely generated simple group.
\end{proof}
Finally we  prove the result on locally graded groups:

\begin{proof}[Proof of Theorem~\ref{locgrnon}]
Since soficity is a local property, $G$ contains a finitely generated non-sofic subgroup $H$. Being a subgroup of a locally graded group, $H$ is itself locally graded.

Let
\[
K=\bigcap \{N \trianglelefteq H \mid [H:N]<\infty\}
\]
be the profinite residual of $H$. Since $H$ is finitely generated, it has only countably many normal subgroups of finite index. Enumerate them as
\[
M_1,M_2,M_3,\ldots
\]
and define
\[
L_n=\bigcap_{i=1}^n M_i \qquad (n\geq 1).
\]
Then each $L_n$ is a finite-index normal subgroup of $H$, and
\[
\bigcap_{n=1}^\infty L_n = K.
\]

We claim that each $L_n$ is non-sofic. Indeed, $H/L_n$ is finite, hence amenable. If $L_n$ were sofic, then $H$ would be sofic as an extension of a sofic group by an amenable group, a contradiction.

Next, we show that the sequence $(L_n)$ does not stabilize. Suppose that $L_n=L_{n+1}=L_{n+2}=\cdots$ for some $n$. Then $L_n=K$. Since $L_n$ is a finitely generated subgroup of the locally graded group $H$, the group $L_n$ is locally graded. As $L_n$ is non-sofic, it is non-trivial, so it has a proper subgroup of finite index. Hence it also has a proper normal subgroup of finite index. Such a subgroup is then a finite-index normal subgroup of $H$ contained in $K$, contradicting the definition of $K$ as the intersection of all finite-index normal subgroups of $H$. Therefore, $(L_n)$ does not stabilize.

Removing repetitions if necessary, we obtain a strictly decreasing sequence
\[
H=N_0 > N_1 > N_2 > \cdots
\]
of finitely generated normal subgroups of $H$, each of which is non-sofic, and such that
\[
\bigcap_{i=0}^\infty N_i = K.
\]
Set
\[
R=\bigcap_{i=0}^\infty N_i.
\]
Then $R=K$, so $R$ is non-trivial, since otherwise $H$ would be residually finite and hence sofic. Moreover, $R$ is contained in the profinite residual of $H$ (in fact, it coincides with it), and $H/R$ is residually finite.
\end{proof}
Proposition \ref{expthm} is not difficult to observe, but it is probably useful for further study, since it proves that if there is a periodic non-sofic group without involutions, then there exists a non-sofic group of unbounded exponent.
\begin{proof}[Proof of Proposition \ref{expthm}]
Let \(G\) be a periodic non-sofic group without involutions. By \cite[Theorem 20]{ivols}, \(G\) embeds into a divisible torsion group. The result follows.
\end{proof}

\bibliographystyle{amsplain}
\bibliography{ref}

@incollection{Gromov1999,
  author       = {Gromov, Mikhail},
  title        = {Endomorphisms of symbolic algebraic varieties},
  booktitle    = {Journal of the European Mathematical Society (JEMS)},
  volume       = {1},
  number       = {2},
  pages        = {109--197},
  year         = {1999},
  doi          = {10.1007/s100970050008},
  note         = {Introduced sofic groups}
}

@book{CapraroLupini2015,
  author    = {Valerio Capraro and Martino Lupini},
  title     = {Introduction to Sofic and Hyperlinear Groups and Connes' Embedding Conjecture},
  series    = {Lecture Notes in Mathematics},
  volume    = {2136},
  publisher = {Springer International Publishing, Cham},
  year      = {2015},
  isbn      = {978-3-319-19332-8},
  doi       = {10.1007/978-3-319-19333-5}
}

@article{neu37,
 author = {Neumann, B. H.},
 title = {Some remarks on infinite groups},
 fjournal = {Journal of the London Mathematical Society},
 journal = {J. Lond. Math. Soc.},
 issn = {0024-6107},
 volume = {12},
 pages = {120--127},
 year = {1937},
 doi = {10.1112/jlms/s1-12.46.120}
 }

@article{bek,
 author = {Brescia, Mattia and Ersoy, K{\i}van{\c{c}} and Kuzucuo{\u{g}}lu, Mahmut},
 title = {{{\(\kappa\)}}-existentially closed groups and centralizers},
 fjournal = {Journal of Algebra},
 journal = {J. Algebra},
 issn = {0021-8693},
 volume = {679},
 pages = {37--55},
 year = {2025},
 doi = {10.1016/j.jalgebra.2025.05.013}
}

@misc{ivols,
 author = {Ivanov, Sergej V. and Ol'shanskij, Aleksander Yu.},
 title = {Some applications of graded diagrams in combinatorial group theory},
 year = {1991},
 howpublished = {Groups, {Vol}. 2, {Proc}. {Int}. {Conf}., {St}. {Andrews}/{UK} 1989, {Lond}. {Math}. {Soc}. {Lect}. {Note} {Ser}. 160, 258-308 (1991).}
}

@book{rotman,
 author = {Rotman, Joseph J.},
 title = {An introduction to the theory of groups.},
 edition = {4th ed.},
 fseries = {Graduate Texts in Mathematics},
 series = {Grad. Texts Math.},
 issn = {0072-5285},
 volume = {148},
 isbn = {0-387-94285-8},
 year = {1995},
 publisher = {New York, NY: Springer-Verlag}
 }

@article{Pestov2008,
  author       = {Pestov, Vladimir},
  title        = {Hyperlinear and sofic groups: a brief guide},
  journal      = {Bulletin of Symbolic Logic},
  volume       = {14},
  number       = {4},
  pages        = {449--480},
  year         = {2008},
  publisher    = {Cambridge University Press},
  doi          = {10.2178/bsl/1231081374}
}

@article{elsza,
 author = {Elek, G{\'a}bor and Szab{\'o}, Endre},
 title = {On sofic groups.},
 fjournal = {Journal of Group Theory},
 journal = {J. Group Theory},
 issn = {1433-5883},
 volume = {9},
 number = {2},
 pages = {161--171},
 year = {2006},
 doi = {10.1515/JGT.2006.011},
}

@book{ste,
 author = {Steinberg, Robert},
 title = {Lectures on {Chevalley} groups},
 fseries = {University Lecture Series},
 series = {Univ. Lect. Ser.},
 issn = {1047-3998},
 volume = {66},
 isbn = {978-1-4704-3105-1; 978-1-4704-3631-5},
 year = {2016},
 publisher = {Providence, RI: American Mathematical Society (AMS)},
 doi = {10.1090/ulect/066}
}

@book{robb1,
 author = {Robinson, Derek J. S.},
 title = {Finiteness conditions and generalized soluble groups. {Part} 1.},
 fseries = {Ergebnisse der Mathematik und ihrer Grenzgebiete},
 series = {Ergeb. Math. Grenzgeb.},
 volume = {62},
 year = {1972},
 publisher = {Springer-Verlag, Berlin}
}

@article{elsza2,
 author = {Elek, G{\'a}bor and Szab{\'o}, Endre},
 title = {Hyperlinearity, essentially free actions and {{\(L^2\)}}-invariants. {The} sofic property},
 fjournal = {Mathematische Annalen},
 journal = {Math. Ann.},
 issn = {0025-5831},
 volume = {332},
 number = {2},
 pages = {421--441},
 year = {2005},
 doi = {10.1007/s00208-005-0640-8}
}

@article{Weiss2000,
  author       = {Weiss, Benjamin},
  title        = {Sofic groups and dynamical systems},
  journal      = {Sankhy\=a: The Indian Journal of Statistics, Series A},
  volume       = {62},
  number       = {3},
  pages        = {350--359},
  year         = {2000},
  note         = {One of the first expositions of sofic groups}
}

@article{baum,
 author = {Baumslag, G.},
 title = {Automorphism groups of residually finite groups},
 fjournal = {Journal of the London Mathematical Society},
 journal = {J. Lond. Math. Soc.},
 issn = {0024-6107},
 volume = {38},
 pages = {117--118},
 year = {1963},
 doi = {10.1112/jlms/s1-38.1.117}
}

\end{document}